\documentclass[11pt,leqno]{article}
\usepackage[margin=1in]{geometry} 
\usepackage{amssymb,amsfonts,amsmath,bbm,mathrsfs,stmaryrd,mathtools}
\usepackage{xcolor}
\usepackage{url}

\usepackage{extarrows}

\usepackage[shortlabels]{enumitem}
\usepackage{tensor}

\usepackage{xr}
\usepackage[T1]{fontenc}
\usepackage[utf8]{inputenc}
\usepackage[colorlinks,
linkcolor=black!75!red,
citecolor=blue,
pdftitle={},
pdfproducer={pdfLaTeX},
pdfpagemode=None,
bookmarksopen=true,
bookmarksnumbered=true,
backref=page]{hyperref}

\usepackage{tikz}
\usetikzlibrary{arrows,calc,decorations.pathreplacing,decorations.markings,decorations.shapes,intersections,shapes.geometric,through,fit,shapes.symbols,positioning,decorations.pathmorphing}

\makeatletter
\newlength\zig@L
\newlength\zig@La
\newlength\zig@Lb

\newcommand{\xzigrightarrow}[2][]{%
  \mathrel{%
    \settowidth{\zig@La}{$\scriptstyle #2$}%
    \settowidth{\zig@Lb}{$\scriptstyle #1$}%
    \zig@L=\zig@La\relax
    \ifdim\zig@Lb>\zig@L \zig@L=\zig@Lb\fi
    \advance\zig@L by 2.2em\relax
    \tikz[baseline=-0.65ex]{%
      \draw[->,
      line cap=round,
      decorate,
      decoration={zigzag,segment length=4pt,amplitude=1.1pt}]%
      (0,0) -- (\zig@L,0)
      node[midway,above=2pt] {$\scriptstyle #2$}%
      \if\relax\detokenize{#1}\relax\else
        node[midway,below=2pt] {$\scriptstyle #1$}%
      \fi
      ;
    }%
  }%
}
\makeatother

\makeatletter
\newcommand{\squigjoin}{1mu} % tune this: 0.5mu, 1mu, 1.5mu, ...

\def\sqleft@{\sim}                    % no overlap here
\def\sqmid@{\sim\mkern-\squigjoin}    % overlap only between repeated mids

\def\rightsquigarrowfill@{%
  \arrowfill@{\sqleft@}{\sqmid@}{\mkern-4mu\succ}%
}

\newcommand{\xrightsquigarrow}[2][]{%
  \ext@arrow 0359\rightsquigarrowfill@{#1}{#2}%
}
\makeatother

\makeatletter
\newcommand*\circled[1]{\tikz[baseline=(char.base)]{
    \node[shape=circle, draw, inner sep=0pt, 
    minimum height={\f@size},] (char) {\vphantom{WAH1g}#1};}}
\makeatother

\makeatletter
\DeclareRobustCommand\widecheck[1]{{\mathpalette\@widecheck{#1}}}
\def\@widecheck#1#2{%
  \setbox\z@\hbox{\m@th$#1#2$}%
  \setbox\tw@\hbox{\m@th$#1%
    \widehat{%
      \vrule\@width\z@\@height\ht\z@
      \vrule\@height\z@\@width\wd\z@}$}%
  \dp\tw@-\ht\z@
  \@tempdima\ht\z@ \advance\@tempdima2\ht\tw@ \divide\@tempdima\thr@@
  \setbox\tw@\hbox{%
    \raise\@tempdima\hbox{\scalebox{1}[-1]{\lower\@tempdima\box
        \tw@}}}%
  {\ooalign{\box\tw@ \cr \box\z@}}}
\makeatother

\usepackage{braket}

\usepackage[amsmath,thmmarks,hyperref]{ntheorem}
\usepackage{cleveref}

\newcommand\nthalias[1]{\AddToHook{env/#1/begin}{\crefalias{lemma}{#1}}}

\nthalias{definition}
\nthalias{example}
\nthalias{examples}
\nthalias{remark}
\nthalias{remarks}
\nthalias{convention}
\nthalias{notation}
\nthalias{construction}
\nthalias{sketch}
\nthalias{theoremN}
\nthalias{propositionN}
\nthalias{corollaryN}
\nthalias{lemma}
\nthalias{proposition}
\nthalias{corollary}
\nthalias{theorem}
\nthalias{conjecture}
\nthalias{question}
\nthalias{assumption}

\creflabelformat{enumi}{#2#1#3}

\crefname{section}{Section}{Sections}
\crefformat{section}{#2Section~#1#3} 
\Crefformat{section}{#2Section~#1#3} 

\crefname{subsection}{\S}{\S\S}
\AtBeginDocument{%
  \crefformat{subsection}{#2\S#1#3}%
  \Crefformat{subsection}{#2\S#1#3}% 
}

\crefname{subsubsection}{\S}{\S\S}
\AtBeginDocument{%
  \crefformat{subsubsection}{#2\S#1#3}%
  \Crefformat{subsubsection}{#2\S#1#3}% 
}

\theoremstyle{plain}

\newtheorem{lemma}{Lemma}[section]
\newtheorem{proposition}[lemma]{Proposition}

\newtheorem{theorem}[lemma]{Theorem}

\theoremstyle{plain}
\theoremnumbering{Alph}

\theoremstyle{plain}
\theorembodyfont{\upshape}
\theoremsymbol{\ensuremath{\blacklozenge}}

\newtheorem{definition}[lemma]{Definition}
\newtheorem{example}[lemma]{Example}

\newtheorem{remark}[lemma]{Remark}

\crefname{definition}{definition}{definitions}
\crefformat{definition}{#2definition~#1#3} 
\Crefformat{definition}{#2Definition~#1#3} 

\crefname{ex}{example}{examples}
\crefformat{example}{#2example~#1#3} 
\Crefformat{example}{#2Example~#1#3} 

\crefname{exs}{example}{examples}
\crefformat{examples}{#2example~#1#3} 
\Crefformat{examples}{#2Example~#1#3} 

\crefname{remark}{remark}{remarks}
\crefformat{remark}{#2remark~#1#3} 
\Crefformat{remark}{#2Remark~#1#3} 

\crefname{remarks}{remark}{remarks}
\crefformat{remarks}{#2remark~#1#3} 
\Crefformat{remarks}{#2Remark~#1#3} 

\crefname{convention}{convention}{conventions}
\crefformat{convention}{#2convention~#1#3} 
\Crefformat{convention}{#2Convention~#1#3} 

\crefname{notation}{notation}{notations}
\crefformat{notation}{#2notation~#1#3} 
\Crefformat{notation}{#2Notation~#1#3} 

\crefname{table}{table}{tables}
\crefformat{table}{#2table~#1#3} 
\Crefformat{table}{#2Table~#1#3}

\crefname{lemma}{lemma}{lemmas}
\crefformat{lemma}{#2lemma~#1#3} 
\Crefformat{lemma}{#2Lemma~#1#3} 

\crefname{proposition}{proposition}{propositions}
\crefformat{proposition}{#2proposition~#1#3} 
\Crefformat{proposition}{#2Proposition~#1#3} 

\crefname{propositionN}{proposition}{propositions}
\crefformat{propositionN}{#2proposition~#1#3} 
\Crefformat{propositionN}{#2Proposition~#1#3} 

\crefname{corollary}{corollary}{corollaries}
\crefformat{corollary}{#2corollary~#1#3} 
\Crefformat{corollary}{#2Corollary~#1#3} 

\crefname{corollaryN}{corollary}{corollaries}
\crefformat{corollaryN}{#2corollary~#1#3} 
\Crefformat{corollaryN}{#2Corollary~#1#3} 

\crefname{theorem}{theorem}{theorems}
\crefformat{theorem}{#2theorem~#1#3} 
\Crefformat{theorem}{#2Theorem~#1#3} 

\crefname{theoremN}{theorem}{theorems}
\crefformat{theoremN}{#2theorem~#1#3} 
\Crefformat{theoremN}{#2Theorem~#1#3} 

\crefname{enumi}{}{}
\crefformat{enumi}{#2#1#3}
\Crefformat{enumi}{#2#1#3}

\crefname{assumption}{assumption}{Assumptions}
\crefformat{assumption}{#2assumption~#1#3} 
\Crefformat{assumption}{#2Assumption~#1#3} 

\crefname{construction}{construction}{Constructions}
\crefformat{construction}{#2construction~#1#3} 
\Crefformat{construction}{#2Construction~#1#3} 

\crefname{sketch}{sketch}{Sketches}
\crefformat{sketch}{#2sketch~#1#3} 
\Crefformat{sketch}{#2Sketch~#1#3} 

\crefname{question}{question}{Questions}
\crefformat{question}{#2question~#1#3} 
\Crefformat{question}{#2Question~#1#3} 

\crefname{equation}{}{}
\crefformat{equation}{(#2#1#3)} 
\Crefformat{equation}{(#2#1#3)}

\numberwithin{equation}{section}

\theoremstyle{nonumberplain}
\theoremsymbol{\ensuremath{\blacksquare}}

\newtheorem{proof}{Proof}
\newcommand\pf[1]{\newtheorem{#1}{Proof of \Cref{#1}}}

\newcommand\bB{{\mathbb B}}
\newcommand\bC{{\mathbb C}}

\newcommand\bG{{\mathbb G}}

\newcommand\bP{{\mathbb P}}

\newcommand\bR{{\mathbb R}}
\newcommand\bS{{\mathbb S}}

\newcommand\bZ{{\mathbb Z}}

\newcommand\cP{{\mathcal P}}

\DeclareMathOperator{\spn}{\mathrm{spn}}

\DeclareMathOperator{\cvx}{\mathrm{cvx}}

\newcommand{\qedhere}{\mbox{}\hfill\ensuremath{\blacksquare}}

\title{Grassmannian spines, projection closure operators, and diametric sweeps}
\author{Alexandru Chirvasitu}

\begin{document}

\date{}

\newcommand{\Addresses}{{% additional braces for segregating \footnotesize
    \bigskip
    \footnotesize

    \textsc{Department of Mathematics, University at Buffalo}
    \par\nopagebreak
    \textsc{Buffalo, NY 14260-2900, USA}  
    \par\nopagebreak
    \textit{E-mail address}: \texttt{achirvas@buffalo.edu}

    % % \medskip
    % % 
    % % \textsc{Department of Mathematics, INSTITUTION}
    % % \par\nopagebreak
    % % \textsc{ADDRESS}
    % % \par\nopagebreak
    % % \textit{E-mail address}: \texttt{??}
    % % 

  }}

\maketitle

\begin{abstract}
  For positive integers $r<d<n$ equip the powerset $2^{\mathbb{G}(r,V)}$ of the $r$-plane Grassmannian of an $n$-dimensional Hilbert space with the closure operator attaching to a set of $r$-planes the smallest superset which along with two $r$-planes also contains all $r$-dimensional orthogonal projections of one onto any $d$-plane containing the other. In the regime $2r\le d$ the classification of closed subsets of $\mathbb{G}(r,V)$ rigidifies, these being precisely the sets of $r$-planes containing a fixed $(\le r)$-plane. The result generalizes its $(r,d,n)=(1,2,3)$ instance, of use in recent geometric-rigidity results motivated by matrix preserver problems. 

  An auxiliary result classifies the balls centered at $p_0\in \mathbb{R^d}$ as the compact fixed points of the dynamical system transforming $K\subseteq \mathbb{R}^d$ into its $p_0$-based diametric sweep: the union of all diameter-$p_0p$ balls for $p\in K$. 
\end{abstract}

\noindent \emph{Key words: Grassmannians; orthogonal projection; projective geometry; rigidity phenomena; saturation; subspace geometry; ternary relations; Wigner-type theorems
}

\vspace{.5cm}

\noindent{MSC 2020: 51A05; 46C05; 51A20; 15A04; 51M15; 15A86
  
  % 51A05 General theory of linear incidence geometry and projective geometries
  % 46C05 Hilbert and pre-Hilbert spaces: geometry and topology (including spaces with semidefinite inner product)
  % 51A20 Configuration theorems in linear incidence geometry
  % 15A04 Linear transformations, semilinear transformations
  % 51M15 Geometric constructions in real or complex geometry
  % 15A86 Linear preserver problems
}

% \tableofcontents

%%%%%%%%%%%%%%%%%%%%%%%%%%%%%%%% 
%%%%%%%%%%%%%%%%%%%%%%%%%%%%%%%% 
\section*{Introduction}

The results forming the focus of the present note originated as offshoots of considerations touching on several entwined themes:
\begin{itemize}[wide]
\item \emph{preserver problems}, in the spirit of, say, \cite{2501.06840v2, MR4927632,GogicPetekTomasevic,Petek-HM,zbMATH05302134} (for only a minuscule glimpse of a vast literature), to the effect that maps between matrix spaces preserving various invariants, prominently spectra and commutation, tend to fall into several rigidly-constrained classes: conjugations, transpose conjugations, and, depending on the specifics of the problem, a handful of other classifiable ``exotic'' types;
  
\item \emph{Wigner}-type results \cite[Chapter 4]{pank_wign}, spiraling out of foundational work on quantum mechanics (see e.g. \cite[\S 3.1]{ball_qm_2e_2015}): all in a common spirit of classifying self-maps of projectivized Hilbert spaces $\bP H$ (i.e. the spaces of \emph{states} familiar in quantum physics) preserving various quantum-mechanically meaningful invariants (orthogonality, say, interpretable as observer \emph{commeasurability} \cite[\S 2-2]{mack_qm_1963}, or angles, cast as state-transition probabilities \cite[(2.8)]{ball_qm_2e_2015}) are again tightly classifiable (induced by unitary or conjugate-unitary operators, say);
  
\item and projective incidence geometry, in a sense emerging as foundational to both themes just touched on, and epitomized by the \emph{fundamental theorem of projective geometry} \cite[Theorem 3.1]{zbMATH01747827}: structure-preserving maps between projective spaces are induced by semilinear transformations. 
\end{itemize}

\cite[Theorem 0.1]{2601.11455v2} will serve as a concrete entry point to the subsequent material: maps $\bG V\xrightarrow{\Psi}\bG W$ between Grassmannians of $(\ge 3)$-dimensional Hilbert spaces, respecting dimensions and lattice operations for \emph{commeasurable} pairs of spaces, are induced by semilinear injections. 

Commeasurability here means that the two spaces in question are both spans of elements ranging over a common orthonormal basis, i.e. may be interpreted as sharing a quantum-mechanical \emph{context} \cite[\S B.1]{MR4449330} (hence the term, intended as reminiscent of observable measurement). In the course of the result's proof, it becomes apparent that
\begin{itemize}[wide]
\item the crux of the matter is the injectivity on lines $\ell\in \bP V$, $\dim V=\dim W=3$ of the projective-spaces maps in question;
\item and, were injectivity to fail via $\Psi \ell=\Psi\ell'$, $\ell\ne \ell'$, the aforementioned contextuality would force the same $\Psi$-value on all $\ell''$ with $\ell\oplus \ell''$ and $\ell'\oplus \ell''$ commeasurable. 
\end{itemize}
It is at this point that the following construct becomes relevant.

Working over fields $\Bbbk$ frequently specialized to $\bR$ or $\bC$, write $\bG(r,V)$ for the \emph{Grassmannian} \cite[\S 1.2]{pank_wign} of $r$-dimensional subspaces of a vector space $V$, and $\bP V:=\bG(1,V)$ for the projective space of lines therein. Assuming $V$ a finite-dimensional real or complex Hilbert space, define ternary relations on $\bG(r,V)$ by
\begin{equation*}
  \tau_d\left(\zeta,\zeta'\mid\zeta''\right)
  \xLeftrightarrow{\quad}
  \exists\left(\text{$d$-plane $\pi\ge \zeta$}\right)
  \left(\zeta''=P_{\pi}\zeta'\right),
  \quad
  1\le d\le \dim V,
\end{equation*}
$P_{\bullet}$ denoting the orthogonal projection onto the space $\bullet\le V$. We also need:

\begin{definition}\label{def:sat}
  A subset $S\subseteq \bG(r,V)$ is \emph{$\tau_d$-saturated}, or \emph{saturated with respect to $\tau_d$} if
  \begin{equation*}
    \forall\left(\zeta,\zeta'\in S\subseteq \bG(r,V)\right)
    \forall\left(\zeta''\in \bG(r,V)\right)
    \left(
      \tau_d\left(\zeta,\zeta'\mid \zeta''\right)
      \xRightarrow{\quad}
      \zeta''\in S
    \right).
  \end{equation*}
  Or: $S$ contains elements in the codomain of $\tau_d$ whenever it contains the relevant elements in its domain. 
\end{definition}

The preceding discussion can be summarized as the observation that as soon as $\Psi\ell=\Psi\ell'$, $\Psi$ in fact takes a common value on the entire $\tau_2$-saturation of $\{\ell,\ell'\}$; hence the natural question of just how large saturated sets can be. The following rigidity result confirms that they are in rather short supply.

\begin{theorem}\label{th:sat.1.all}  
  Let $1<d< n\in \bZ_{\ge 3}$ and $V$ an $n$-dimensional real or complex Hilbert space.

  The only $\tau_d$-saturated subsets of $\bP V$ are singletons and $\bP V$ itself. 
\end{theorem}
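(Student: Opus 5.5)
The plan is to show that any $\tau_d$-saturated $S\subseteq\bP V$ containing two distinct lines $\ell\ne\ell'$ is all of $\bP V$. Singletons are trivially saturated, since $\tau_d(\ell,\ell\mid\ell'')$ forces $\ell''=P_\pi\ell=\ell$, and $\bP V$ is obviously saturated.

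\emph{Step 1: compute one round of $\tau_d$.} Fix a unit vector $u\in\ell$ and write a spanning vector of $\ell'$ as $v=u+y$ with $y\in\ell^\perp$. This is possible after rescaling whenever $\ell'\not\perp\ell$; the orthogonal case is handled at the end. Every $d$-plane $\pi\ge\ell$ has the form $\ell\oplus\sigma$ with $\sigma\le\ell^\perp$ a $(d-1)$-plane, and then $P_\pi v=u+P_\sigma y$. Because $1\le d-1<n-1=\dim\ell^\perp$, I expect the vectors $P_\sigma y$, as $\sigma$ ranges, to fill out exactly
\[
\Sigma(y)=\left\{x\in\ell^\perp : \langle y-x,x\rangle=0\right\}.
\]
This is the diametric sphere with diameter $[0,y]$ in the real case, and it contains every real circle through $0$ and $y$ in the complex case. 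To check this, let $x\in\Sigma(y)$ be nonzero. Take $\sigma$ to contain $x$, to be orthogonal to $y-x$, and otherwise arbitrary; such $\sigma$ exists because $\dim\ell^\perp\ge d$ leaves room. The cases $x=0$ and $x=y$ are immediate. Hence, in the affine chart $\{u+y\}$ centered at $\ell$, saturation under the base point $\ell$ maps each point $y$ to all of $\Sigma(y)$.

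\emph{Step 2: fill the ball.} Keep $\ell$ as the base and iterate Step 1 on the new lines. We get $\Sigma(q)\subseteq S$ for every $q\in\Sigma(y)$. The claim is that $\bigcup_{q\in\Sigma(y)}\Sigma(q)$ is the full closed ball $B(y)$ with diameter $[0,y]$. Given $z$ in $B(y)$, we need some $q\in\Sigma(y)$ with $\langle q-z,z\rangle=0$. The affine hyperplane through $z$ orthogonal to $z$ passes through a point of the ball, so it meets its boundary sphere; this is the diametric-sweep picture from the abstract. So $S$ contains every line in the chart at $\ell$ lying in $B(y)$, a set with nonempty interior in $\bP V$.

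\emph{Step 3: local to global (the main obstacle).} Let $\mathrm{int}\,S$ be the interior of $S$. Steps 1--2 show that $\mathrm{int}\,S$ is nonempty, and they apply with any pair of points of $S$ as base and moved point. I will argue that $\mathrm{int}\,S$ is closed in the connected space $\bP V$, and hence equal to $\bP V$.

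Take $m$ in the closure of $\mathrm{int}\,S$. Choose $m_1\in\mathrm{int}\,S$ very close to $m$, and a second point $m_2\in\mathrm{int}\,S$ near $m_1$, positioned so that in the chart at $m_2$ the ball $B(m_1)$ already contains a neighbourhood of $m$. The intended choice is to take $m_2$ slightly beyond $m_1$ on the side away from $m$, which is possible because $\mathrm{int}\,S$ is open. The delicate point is the geometry: the balls $B(\cdot)$ have $0$, i.e. the base point, on their boundary. So I will have to use the flexibility of varying both the base and the moved point inside the open set, and exploit that the chart changes are projective maps that are nearly isometric at small scales.

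This handles every line not orthogonal to some point of $S$. Lines orthogonal to $\ell$ are reached by switching the base to another element of $S$, which is available since $\mathrm{int}\,S\ne\emptyset$. I expect this openness/closedness bookkeeping in Step 3, particularly the boundary-tangency of the diametric balls at the base point, to be the only genuinely fiddly part.
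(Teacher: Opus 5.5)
You have a genuine gap in Step~3, the local-to-global step. Your Steps 1--2 are correct over $\bR$, and they make a nicer entry point than the paper's reduction to $(d,n)=(2,3)$. The one-step locus $\Sigma(y)$ is the sphere with diameter $[0,y]$ in $\ell^\perp$ for every $2\le d\le n-1$, so you handle all dimensions at once.

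The configuration you propose in Step~3 fails. In the chart at $m_2$, the set you control is the ball with diameter $[m_2,m_1]$. It contains a neighbourhood of $m$ only if $m$ sees $m_1,m_2$ under an obtuse angle. Placing $m_2$ beyond $m_1$ on the side away from $m$ puts $m_1$ between $m_2$ and $m$, so $m$ lies \emph{outside} that ball. An obtuse angle at $m$ needs points of $S$ on both sides of $m$, which is exactly what you lack at a boundary point of $\mathrm{int}\,S$; a priori $\mathrm{int}\,S$ could be a thin wedge with apex $m$. The near-isometry of chart changes does not help, because the obstruction is the tangency of the balls at the base point, not distortion.

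The missing idea is to keep the base \emph{fixed} and keep sweeping, which is what the paper does.
\begin{enumerate}[(i),wide]
\item With base $\ell$, all iterates $\Delta_0^k B(y)$ lie in $S$. The closure of their union is a compact fixed point of the diametric sweep, hence by \Cref{pr:diam.cl} it is the ball \emph{centred at the base} of radius $|y|$.
\item You can also see this by hand: $k$ Thales steps through angle $\alpha/k$ reach direction $\alpha$ at length $|y|\cos^k(\alpha/k)\to|y|$.
\item So $S$ contains every line at angle $<\angle(\ell,\ell')$ from $\ell$. This is an honest neighbourhood of $\ell$, whose size is controlled by the distance to any other point of $S$.
\item Openness of $S$ is then immediate. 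Closedness also follows: if $S\ni\ell_j\to m$, the cone about $\ell_j$ through a fixed $\ell_k\ne m$ near $m$ contains $m$ once $\ell_j$ is much closer to $m$ than $\ell_k$ is.
\end{enumerate}

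Separately, your complex-case claim in Step~1 is wrong. The condition $\langle y,x\rangle=\|x\|^2$ forces $\langle y,x\rangle\in\bR$, so $\Sigma(y)$ is the diametric sphere cut by the real hyperplane $\mathrm{Im}\langle y,x\rangle=0$. The circle $\tfrac{y}{2}(1+e^{i\theta})\subset\bC y$ is not contained in it. Likewise $z=\tfrac{1+i\varepsilon}{2}y$ lies in the real ball $B(y)$ but in no $\Sigma(q)$ with $q\in\Sigma(y)$.

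Over $\bC$ you should therefore first pass to a real form containing $\ell,\ell'$, as the paper does. You then need a short extra step to reach lines outside that form. Given $\ell''=\bC w$, rephased so that $\langle u,w\rangle\in\bR$, some $0\ne e$ in that real form with $e\perp\ell$ satisfies $\langle e,w\rangle\in\bR$; this is one real linear condition on an $(n-1)$-dimensional real space. Then $\ell$, $\bC e$, $\ell''$ lie in a common real form, and the real case applies there.
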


As complete a classification as it may be within its domain, the discrete dichotomy in \Cref{th:sat.1.all} (saturation entails being either singleton or full) obscures the deeper nature of the rigidity phenomenon underlying that classification. \Cref{ex:spine} notes that more generally, the full set of $r$-planes containing a fixed common $(\le r)$-dimensional core (the paper title's \emph{Grassmannian spines}) is $\tau_d$ saturated. The converse is the higher analogue of \Cref{th:sat.1.all}: per \Cref{th:2rd.sat.r.all} below, the spines are precisely the $\tau_d$-saturated sets under only the plainly necessary dimensional requirements. 

In roughly the same spirit of geometric/metric rigidity in the context of closure operators, there is an auxiliary result to \Cref{th:sat.1.all} that although simple in nature, is suggestive of ramifications. Consider, for fixed $p_0\in \bR^d$, the set-valued discrete dynamical system attaching to a subset $K\subseteq \bR^d$ its \emph{$p_0$-based diametric sweep}: the union of the balls with diameters $p_0 p$, $p\in K$. \Cref{pr:diam.cl}, distilling some of the elementary-geometric content pertinent to $\tau$-saturation, classifies the compact fixed points of that dynamical system as precisely the $p_0$-centered balls.  

%%%%%%%%%%%%%%%%%%%%%%%%%%%%%%%% 
\subsection*{Acknowledgments}

I am grateful for instructive comments from I. B\'ar\'any and R. Schneider.

% % %%%%%%%%%%%%%%%%%%%%%%%%%%%%%%%% 
% % %%%%%%%%%%%%%%%%%%%%%%%%%%%%%%%% 
% % \section{Preliminaries}\label{se:prel}
% % 

%%%%%%%%%%%%%%%%%%%%%%%%%%%%%%%% 
%%%%%%%%%%%%%%%%%%%%%%%%%%%%%%%% 
\section{Orthogonality-oriented relations on state spaces}\label{se:orth.rel}

The following low-dimension result is auxiliary to proving \Cref{th:sat.1.all}, which will subsequently supersede it.

\begin{proposition}\label{pr:sat.1.23}
  For a $3$-dimensional real or complex Hilbert space $V$ the only $\tau_2$-saturated subsets of $\bP V$ are singletons and $\bP V$ itself. 
\end{proposition}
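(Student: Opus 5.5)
The plan is to understand the $\tau_2$-saturation of a two-element set $\{\ell,\ell'\}$ concretely, and then bootstrap its angular spread until it covers all of $\bP V$. First, singletons are saturated. If $\zeta=\zeta'$, any plane $\pi\ge\zeta$ satisfies $P_\pi\zeta'=\zeta$, so nothing new is produced. (The empty set is saturated vacuously; the statement concerns nonempty $S$.) So it suffices to show that a saturated $S$ containing two distinct lines $\ell\ne\ell'$ is all of $\bP V$.

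\emph{Step 1 (real local computation).} Let $u,v$ be unit vectors spanning $\ell,\ell'$, at angle $\theta\in(0,\pi/2]$. Planes $\pi\ge\ell$ are $\pi=\mathrm{span}(u,w)$ with $w$ a unit vector in $u^\perp$. Then
\begin{equation*}
  P_\pi v=\langle v,u\rangle u+\langle v,w\rangle w .
\end{equation*}
As $w$ rotates in the $2$-dimensional space $u^\perp$, the vector $P_\pi v$ traces the circle with diameter from $\langle v,u\rangle u$ to $v$. This is the planar shadow of the ``diametric sweep'' from the abstract. Consequently the lines $P_\pi\ell'$ sweep continuously through every angle $\varphi\in[0,\theta]$ from $\ell$, within the plane $\mathrm{span}(\ell,\ell')$. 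Swapping the roles of $\ell$ and $\ell'$ gives the symmetric family around $\ell'$.

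\emph{Step 2 (filling a cone).} Fix $\ell\in S$ and set $\theta_0=\sup\{\angle(\ell,m):m\in S\}>0$. By Step 1, $S$ contains, along each great circle through $\ell$ reaching some $m\in S$, all lines between $\ell$ and $m$. To leave that single great circle, apply Step 1 to the pair $(m,\ell)$ with $m$ close to $\ell$. This produces lines off the original plane, because the circle of diameter lies in a plane transverse to $\mathrm{span}(\ell,m)$ once $w$ is not in it. By transitivity of rotations fixing $\ell$ and repeated use of this construction, I aim to show that $S$ contains a full circular cone $C_\varepsilon(\ell)$ of lines within some angle $\varepsilon>0$ of $\ell$.

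\emph{Step 3 (doubling).} Suppose $S\supseteq C_\varepsilon(\ell)$. Pick $m_1,m_2\in C_\varepsilon(\ell)$ on opposite sides of $\ell$ in a common plane, so $\angle(m_1,m_2)\approx2\varepsilon$. Applying Step 1 to this pair, and then Step 2 around each point of the cone, yields $S\supseteq C_{\varepsilon'}(\ell)$ with $\varepsilon'\ge\min(3\varepsilon/2,\pi/2)$, say. Iterating, after finitely many rounds the cone exhausts all angles up to $\pi/2$, so $S=\bP V$.

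The main obstacle is making Step 2 and the doubling quantitatively clean. The danger is that the sweep in Step 1 only moves \emph{toward} $\ell$ along diametric circles, so the angle from the base point never increases. Growth must therefore come from switching base points, and I would track it via the auxiliary diametric-sweep picture: the set of lines reachable from a base line behaves like the union of diameter-$p_0p$ balls, whose fixed points are centered balls.

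\emph{Complex case.} Any two complex lines $\ell,\ell'$ lie in a real form $V_{\bR}\cong\bR^3$ of $V$ (choose an orthonormal basis with $u,v$ having real coordinates after rephasing). Real planes in $V_{\bR}$ complexify to complex planes in $V$, with compatible orthogonal projections. Hence $S\cap\bP V_{\bR}$ is saturated for the real structure, and by the real case $S\supseteq\bP V_{\bR}$. Every complex line lies in $\bP V'_{\bR}$ for some real form $V'_{\bR}$ sharing two distinct lines with $V_{\bR}$; for instance, take a real form containing $u$, a line of $\bP V_{\bR}$ not orthogonal to the target, and the target itself. Rerunning the argument on $V'_{\bR}$ then gives $S=\bP V$.
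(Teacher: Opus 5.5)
Your Steps 2--3 contain a genuine gap, which you partly acknowledge. Step 2 only \emph{aims} at some cone $C_\varepsilon(\ell)\subseteq S$, with $\varepsilon$ uncontrolled. ``Transitivity of rotations fixing $\ell$'' cannot supply it: $\tau_2$ is rotation-equivariant, but $S$ is not known to be invariant under rotations about $\ell$. The growth $\varepsilon'\ge\min(3\varepsilon/2,\pi/2)$ in Step 3 is simply asserted. ``Step 2 around each point of the cone'' returns cones of unknown size, so nothing prevents the iteration from stalling.

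Step 1 also misdescribes its own locus. The circle $C_0$ traced by $P_\pi v$ lies in the affine plane $p+\ell^{\perp}$, where $p=\langle v,u\rangle u$, and it meets $\spn(\ell,\ell')$ only at $p$ and $v$. So apart from $\ell,\ell'$ themselves, none of the lines $P_\pi\ell'$ lies in that plane. For $\theta=\pi/2$ they are all orthogonal to $\ell$, so the claimed range of angles fails there. The great-circle arcs at the start of Step 2 therefore do not come ``by Step 1''. They do appear one round later: for $x$ on the open segment $pv$, the perpendicular to $pv$ through $x$ in $p+\ell^{\perp}$ meets $C_0$ at some $p_0$, and $x$ lies on the circle with diameter $pp_0$.

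What is missing is a cone lemma whose size depends only on the angle: if $\ell\ne\ell'\in S$ make an angle $\theta\in(0,\pi/2)$, then every line at angle $<\theta$ from $\ell$ lies in $S$. The paper obtains this by never changing the base.
\begin{itemize}
\item Each $p_0\in C_0$ satisfies $P_\ell p_0=p$. So projecting $0p_0$ onto planes through $\ell$ gives the circle with diameter $pp_0$ in the same affine plane.
\item Hence $S$ contains every line $0q$ with $q\in D_\infty=\bigcup_n\Delta_p^nC_0$ (notation of \Cref{re:it.diam}).
\item The closure $\overline{D_\infty}$ is compact, lies in the closed disk of radius $|v-p|$ about $p$, contains $v$, and contains $\bB_{p,q}$ for each of its points $q$.
\item \Cref{pr:diam.cl} then forces $\overline{D_\infty}$ to be that disk, and $D_\infty$ contains the open disk.
\end{itemize}
Your remark that a single sweep only moves toward the base is correct about angles. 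But the fixed-base process spreads rotationally until it fills the whole open cone of half-angle $\theta$, and this is exactly what your Steps 2--3 lack.

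A cruder uniform bound would also rescue your scheme. For compact $K\ni p$, $\Delta_pK$ consists of the points $p+\rho e$ with $|e|=1$ and $0\le\rho\le\max_{q\in K}\langle q-p,e\rangle$. From this one checks that $\Delta_p^2C_0$ already contains the disk of radius $|v-p|/8$ about $p$, so every rebasing gains a definite angle. Given the lemma, rebasing at a line at angle just under $\theta$ from $\ell$ nearly doubles the reach. The paper instead concludes that $S$ is clopen in the connected $\bP V$.

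Finally, in your complex reduction, three lines need not lie in a common real form. For example, the lines of $e_1$, $e_1+e_2$, $e_1+ie_2$ have the non-real triple product $\langle e_1,e_1+e_2\rangle\langle e_1+e_2,e_1+ie_2\rangle\langle e_1+ie_2,e_1\rangle$. So the auxiliary line from $\bP V_{\bR}$ in your last sentence must be chosen with care; taking it orthogonal to $u$ works.
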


A brief detour will help build some auxiliary tooling. I have been unable to locate the following result in precisely its present form; it is recorded here for completeness, future use, and whatever intrinsic interest it may possess.

\begin{proposition}\label{pr:diam.cl}
  Let $p_0\in \bR^d$. A compact subset $K\subset \bR^d$ containing all balls $\bB_{p_0,p}$ with diameters $p_0 p$ for arbitrary $p\in K$ is a ball centered at $p_0$. 
\end{proposition}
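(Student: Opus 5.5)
Translating, I assume $p_0=0$. I also take $K\ne\emptyset$; the empty set is the trivial exception, or the degenerate ball. The plan is to show that $K$ equals the closed ball of radius $R:=\max_{p\in K}|p|$ centered at $0$. This maximum exists and is attained at some $q\in K$ because $K$ is compact. The inclusion $K\subseteq \overline{B}(0,R)$ is immediate from the definition of $R$. If $R=0$ then $K=\{0\}$ and there is nothing to prove, so assume $R>0$.

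The geometric input is Thales' theorem. For $p\ne 0$ the ball $\bB_{0,p}$ contains, for every unit vector $u$ making angle $\theta\in[0,\pi/2]$ with $p$, the point $|p|\cos\theta\cdot u$. This is the foot of the perpendicular from $p$ to the line $\bR u$, and it lies on the sphere with diameter $0p$. Iterating this inside the hypothesis, I will show that $K$ contains points of norm arbitrarily close to $R$ in every direction.

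Fix a unit vector $v$, and let $\alpha\in[0,\pi]$ be its angle with $q$. Work in a $2$-plane containing $q$ and $v$ (any one if they are collinear). Take unit vectors $u_0=q/R,u_1,\dots,u_m=v$ in that plane, rotating monotonically by steps of $\alpha/m$ with $m\ge 2$, so that each step is at most $\pi/2$. Set $p_0'=q$ and $p_{k+1}'=|p_k'|\cos(\alpha/m)\,u_{k+1}$. By the Thales observation, $p_{k+1}'\in\bB_{0,p_k'}$. Since the hypothesis applies to any point already in $K$, induction gives $p_k'\in K$ for all $k$. In particular
\begin{equation*}
  R\cos^m(\alpha/m)\, v\in K .
\end{equation*}
As $m\to\infty$ we have $\cos^m(\alpha/m)\to 1$, so these points converge to $Rv$. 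Because $K$ is closed, $Rv\in K$. Hence the whole sphere of radius $R$ lies in $K$.

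Finally, let $x$ satisfy $0<|x|\le R$, and put $p=Rx/|x|$, which lies in $K$ by the previous step. The ball $\bB_{0,p}$ contains the segment $[0,p]$, and $x$ lies on that segment, so $x\in K$. The point $0$ also lies in every such ball. Therefore $\overline{B}(0,R)\subseteq K$, and combined with the first inclusion this gives equality.

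The main obstacle is the propagation step: Thales alone only produces points of norm $R\cos\theta<R$, so a single application never reaches the sphere. The plan resolves this by rotating in many small steps, whose cumulative loss of norm $\cos^m(\alpha/m)$ tends to $1$, and then using closedness of $K$ to reach the limit point $Rv$.
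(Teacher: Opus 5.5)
Your proof is correct, but it takes a genuinely different route from the paper. Both arguments need $d\ge 2$: you use it when choosing a $2$-plane through $q$ and $v$, the paper when reducing to the planar case. For $d=1$ the statement actually fails, e.g. for $K=[p_0,p_0+1]$.

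The paper reduces to $d=2$ and argues through convex geometry:
\begin{itemize}
\item a segment in $\partial\cvx K$ would be crossed by one of the disks $\bB_{p_0,p^{\pm}}$ at its endpoints, so $\cvx K$ is strictly convex and equals $K$;
\item the smooth disk $\bB_{p_0,q}\subseteq K$ forces the supporting line at each boundary point $q\ne p_0$ to be unique and orthogonal to $q-p_0$;
\item integrating $\dot{\mathbf r}\cdot\mathbf r=0$ along $\partial K$ then gives constant radius.
\end{itemize}
You instead start from a farthest point $q$ and carry it around the sphere of radius $R$ by $m$ small Thales projections. You absorb the cumulative loss $\cos^m(\alpha/m)\to 1$ through closedness of $K$.

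What your route buys: it is more elementary and self-contained. It needs no strict convexity, no regularity theory for convex curves, no justification of the differential step, and no separate reduction to the plane. It also proves more. Since $p'_k\in\Delta_{p_0}^k\{q\}$, the iterated sweeps of a \emph{single} point already contain the open ball of radius $|q-p_0|$ about $p_0$. That is precisely what the proof of \Cref{pr:sat.1.23} consumes. There the paper instead applies the fixed-point statement to $\overline{D_\infty}$, tacitly needing that this closure is still $\Delta_p$-stable and then passing back from the closure to $D_\infty$.

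What the paper's route buys: structural information, namely strict convexity and smoothness of $\partial K$ via the principle of \Cref{re:reg.if.reg}. That information would transfer to sweeps by other smooth convex bodies, where no explicit Thales computation is available.
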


\begin{remark}\label{re:it.diam}
  \Cref{pr:diam.cl} can be cast in dynamical terms, as a set-valued fixed-point result. To elaborate, define the \emph{($p_0$-based) diameter} transform of a set $K\subseteq \bR^d$ by
  \begin{equation*}
    \Delta_{p_0}
    K:=
    \bigcup_{p\in K}\bB_{p_0,p}
    ,\quad
    \bB_{p_0,p}:=\text{ball with diameter $p_0p$}.
  \end{equation*}
  Such geometrically-motivated transforms, often amenable to iteration, abound in the mathematical morphology literature (e.g. dilation/erosion/component-extraction transforms of \cite[\S II.B]{serra_im-an-1_1984}).

  In the present context, \Cref{pr:diam.cl} classifies the compact fixed points of $\Delta_{p_0}$ as the $p_0$-centered balls. 
\end{remark}

\begin{example}\label{ex:cardio}
  Per \cite[Lemma 2 and subsequent paragraph]{MR3324686}, the diameter transform $\Delta_{p_0}\bS^1$ of a circle $\bS^1\ni p_0$ is (the region enclosed by) the \emph{cardioid} with a cusp at $p_0$ and touching $\bS^1$ at the antipodal point. 
\end{example}

\pf{pr:diam.cl}
\begin{pr:diam.cl}
  It suffices to argue the case $d=2$: if all intersections $K\cap\pi$ for 2-planes $\pi\ni p_0$ are $p_0$-centered balls so too is $K$. 
  
  \begin{enumerate}[(I),wide]
  \item\label{item:pr:diam.cl:pf.is.cvx}\textbf{: The convex hull $\cvx K$ is strictly convex.} Recall \cite[Chapter One, Definition post Theorem 1]{zbMATH03481242} that \emph{strict} convexity, for a convex set such as $\cvx K$, means precisely that the boundary $\partial \cvx K$ contains no non-degenerate segments.
    
    Were this otherwise, there would be \emph{extreme} points $p^+\ne p^-\in K\cap \partial\cvx K$ (in the usual convexity-parlance sense \cite[post Lemma 1.4.2]{schn_cvx_2e_2014}) connected by a segment $p^+p^-\subset \partial \cvx K$. At least one of the balls (i.e. disks) $B_{p_0,p^{\pm}}$ will intersect both half-planes cut out by the $\cvx K$-\emph{supporting} line \cite[\S 1.3]{schn_cvx_2e_2014} $p^+p^-$
    \begin{itemize}[wide]
    \item certainly if $p_0\in \{p^{\pm}\}$, for then the single line in question contains the diameter of the single non-degenerate disk;
      
    \item and also when $p_0\not\in \{p^{\pm}\}$, for in that case the only $\bB_{p_0,p^{\pm}}$-supporting lines respectively at $p^{\pm}$ are orthogonal to the distinct lines $p_0 p^{\pm}$, so cannot both coincide with $p^+ p^-$.
    \end{itemize}
    This contradicts the disk-containment assumption, settling the present step \Cref{item:pr:diam.cl:pf.is.cvx}. Observe also, in consequence, that in fact $K=\cvx K$ itself is (strictly) convex.

  \item\textbf{: $\partial K$ is differentiable away from $p_0$.} In other words, each $q\in \partial K$ is incident to a unique $K$-supporting line unless $q=p_0$ (which possibility we have not discounted yet, but will fall by the wayside in the process of the argument). Indeed: the disk $\bB_{p_0,q}$, being smooth itself, has a unique supporting line at $q$; consequently, of any two $K$-supporting lines intersecting at $q\in \partial K$ one must cross the interior $\overset{\circ}{\bB}_{p_0,q}$.

  \item\textbf{: Conclusion.} Parametrize the curve $\partial K\setminus \{p_0\}$ as $\mathbf{r}(t)$ in a $p_0$-centered coordinate system. Every disk $\bB_{p_0,q:=\mathbf{r}(t)}$ must be supported by the $K$-supporting line through $q$, so that $\dot{\mathbf{r}}(t)\cdot \mathbf{r}(t)=0$. This renders $|\mathbf{r}(t)|$ constant, finishing the proof.  \qedhere
  \end{enumerate}
\end{pr:diam.cl}

\begin{remark}\label{re:reg.if.reg}
  The simple observation underpinning the differentiability noted in the course of the proof of \Cref{pr:diam.cl} is that a point $p\in \partial K$ for compact convex $K$ will be \emph{regular} (i.e. \cite[\S 2.2]{schn_cvx_2e_2014} incident to a unique $K$-supporting hyperplane) provided some regular compact convex $K_p$ is contained in $K$: naturally, all $K$-supporting hyperplanes must also be $K_p$-supporting at $p$.
\end{remark}

\pf{pr:sat.1.23}
\begin{pr:sat.1.23}
  In other words, saturated sets containing at least two distinct lines must be full. Observe that nothing is lost if, for convenience, we work over $\bR$: having fixed two $\ell\ne \ell'\in \bP V$ belonging to the saturated set $S$, one can always select a \emph{real structure} \cite[Definition 18.5.20]{jones_vna} on $V$ compatible with both $\ell$ and $\ell'$.

  Consider, then, lines $\ell,\ell'$ in a $\tau_2$-saturated set $S\subseteq \bP V$ for real 3-dimensional $V$, both distinct and non-orthogonal (for indeed, $\ell'$ will have some projection $P_{\pi}\ell'$, $\ell \le\pi\in \bG(2,V)$ non-orthogonal to $\ell$). For fixed $0\ne p'\in \ell'$ the orthogonal-projection locus
  \begin{equation*}
    \left\{P_{\pi}p'\ :\ \ell\le \pi\in \bG(2,V)\right\} 
  \end{equation*}
  is the circle $C_0$ with diameter $pp'$, $p:=P_{\ell}p'\ne 0$, lying in the affine plane $\ell^{\perp}+p$: containing $p$ (and $p'$) and orthogonal to $\ell$. The procedure just carried out functions with $\ell$ and any of the lines
  \begin{equation*}
    \ell'':=0p_0
    ,\quad
    p_0\in C_0\ne p
  \end{equation*}
  in place of $\ell'$, so that for all members $q_1$ of the region $D_1\subset \ell^{\perp}+p$ swept out by the circles (equivalently, the disks) with diameters $pp_0$, $p_0\in C_0$ the lines $0q_1$ all belong to $S$. In the language of \Cref{re:it.diam}, $D_1=\Delta_p C_0$ (the $\Delta$ constructions being carried out in the affine plane $\ell^{\perp}+p$). We can now iterate indefinitely:
  \begin{equation*}
    \forall\left(q\in D_{\infty}\right)
    \left(0q\in S\right)
    ,\quad
    D_{\infty}:=\bigcup_{n\ge 1}D_n
    ,\quad
    D_{n+1}:=\Delta_p D_n.
  \end{equation*}
  It follows from \Cref{pr:diam.cl} above that $\overline{D_{\infty}}\subset \ell^{\perp}+p$ is the radius-$pp'$, $p$-centered disk in that affine plane, and contains the interior of that disk. Thus: for lines $\ell\ne \ell'\in S$ forming an angle $0<\theta<\frac{\pi}{2}$, the lines interior to the circular cone (with tip angle $2\theta$) formed by revolving $\ell'$ around $\ell$ again belong to $S$. This will suffice to conclude:
  \begin{itemize}[wide]
  \item $S\subseteq \bP V$ is on the one hand open, as we have just argued that along with $\ell\ne \ell'$ it contains an entire revolution cone around $\ell$; 
  \item on the other hand, $S$ is also closed: if
    \begin{equation*}
      S
      \ni
      \ell_n
      \xrightarrow[\quad n\quad]{\quad}
      \ell
      \in \bP V
    \end{equation*}
    then some $\ell_m\ne \ell$ will form a $\left(<\frac{\pi}2\right)$-angle with $\ell$, and revolving $\ell_m$ around $\ell_n$ sufficiently close to $\ell$ will produce a cone whose interior contains $\ell$.
  \end{itemize}
  $\bP V$ being connected, a non-empty clopen subset thereof must be full. 
\end{pr:sat.1.23}

\begin{remark}\label{re:tau.sym}
  In the context of the small-dimension case treated in \Cref{pr:sat.1.23}, the relation $\tau_2$ in $\bP V$ is symmetric in its first two variables: for $\ell\ne \ell'$
  \begin{equation*}
    \tau_2(\ell,\ell'\mid\ell'')
    \iff
    \ell''
    =
    \left(\pi:=\spn\left\{\ell,\ell''\right\}=\ell\oplus \pi^{\perp}\right)
    \cap
    \left(\pi':=\spn\left\{\ell',\ell''\right\}=\ell'\oplus {\pi'}^{\perp}\right),
  \end{equation*}
  with the caveat that $\spn\left\{\ell,\ell''\right\}$ is to be interpreted as $\ell\oplus \left(\ell\oplus \ell'\right)^{\perp}$ in the degenerate case $\ell=\ell''$ and similarly with the pair $\ell',\ell''$ in place of $\ell,\ell''$ (cf. \cite[Proof of Theorem 0.1]{2601.11455v2}).
 
  Even this partial symmetry is not valid in general, for $\tau_d$ on $\bG(r,V)$: one might have $\tau_d(\eta,\eta'\mid \eta)$ if $P_{\pi}\eta'=\eta$, while at the same time $\dim \left(\eta+\eta'\right)>d$ so that no $d$-plane can contain $\eta$ and $\eta'$ both, meaning that $\tau_d(\eta',\eta\mid \eta)$ fails. Examples are easily constructed with $r=2$ and $d=3$, say. 
\end{remark}

\pf{th:sat.1.all}
\begin{th:sat.1.all}
  \Cref{pr:sat.1.23} does most of the work, the remainder being an inductive argument that will remove the dimension-related distractions. Denote the statement's claim by $\cP_{d,n}$ (always assuming $1<d<n$, whenever employing the symbol). 
  \begin{enumerate}[(I),wide]
  \item\label{item:th:sat.all:pf.dnd1n1}\textbf{: $\cP_{d,n}\Rightarrow \cP_{d+1,n+1}$.} For a saturated set
    \begin{equation*}
      \bP V
      \supseteq
      S\ni \ell\ne \ell'
      ,\quad
      \ell,\ell'\in \bP V
      ,\quad
      \dim V=n+1,
    \end{equation*}
    substitute
    \begin{itemize}[wide]
    \item  for $V$, a hyperplane $H\le V$ containing $\ell$, $\ell'$ and any arbitrary third line $\ell''$;

    \item and for the arbitrary $(d+1)$-subspaces of $V$ containing $\ell$, the $d$-subspaces of $H$ with the same property.  
    \end{itemize}
    To conclude, observe that projecting arbitrary lines in $H$ on $\pi\le H$ with $\dim \pi=d$ produces the same line as projecting on the $(d+1)$-space $\pi\oplus H^{\perp}$. 

  \item\label{item:th:sat.all:pf.dnd1n}\textbf{: $\cP_{d,n}\Rightarrow \cP_{d+1,n}$.} Effectively the same argument as before achieves this too: the ambient space $V$ can stay unaffected, while still projecting on $(d+1)$-spaces of the form $\pi\oplus \kappa$ for a line $\kappa$ orthogonal to $\ell$, $\ell'$ and any arbitrary other line $\ell''$ (ensuring that the latter belongs to the saturated set $S$).

    Jointly, \Cref{item:th:sat.all:pf.dnd1n1,item:th:sat.all:pf.dnd1n} and induction have reduced the problem to the already-settled \Cref{pr:sat.1.23}.  \qedhere
  \end{enumerate}
\end{th:sat.1.all}

There is a yet-more-general version of \Cref{th:sat.1.all}, covering arbitrary Grassmannians. That statement, though, cannot be \emph{precisely} the same.

\begin{example}\label{ex:spine}
  Fix $1\le r<d<n\in \bZ_{\ge 3}$, an $n$-dimensional Hilbert space, and fix
  \begin{equation*}
    \pi\in \bG(\le r,V)
    :=
    \bigsqcup_{1\le r'\le r}\bG(r',V).
  \end{equation*}
  The subset
  \begin{equation}\label{eq:uparr}
    \pi^{\uparrow}
    =
    \pi_{\bG(r,V)}^{\uparrow}
    :=
    \left\{\pi'\in \bG(r,V)\ :\ \pi'\ge \pi\right\}
    \subseteq
    \bG(r,V)
  \end{equation}
  (the \emph{$r$-spine of $\pi$}) is then $\tau_d$-saturated. 
\end{example}

%\newpage

The phenomenon recorded in \Cref{ex:spine}, it turns out, accounts for all saturation under the appropriate dimension constraints. 

\begin{theorem}\label{th:2rd.sat.r.all}
  For $1\le r< d<n\in \bZ_{\ge 3}$ the following conditions are equivalent.
  \begin{enumerate}[(a),wide]
  \item\label{item:th:2rd.sat.r.all:2rd} $2r\le d$. 

  \item\label{item:th:2rd.sat.r.all:spines} For any $n$-dimensional real or complex Hilbert space $V$, a $\tau_d$-saturated subset $S\subseteq \bG(r,V)$ is of the form
  \begin{equation*}
    S=\left(\bigcap S\right)^{\uparrow}
    \quad
    \left(\text{in the notation of \Cref{eq:uparr}}\right).
  \end{equation*}
  Consequently,
  \begin{equation*}
    \bG(\le r,V)
    \ni
    \pi
    \xmapsto{\quad}
    \pi^{\uparrow}
    \in
    2^{\bG(r,V)}
  \end{equation*}
  is a bijection between $\bG(\le r,V)$ and the collection of $\tau_d$-saturated sets of $r$-planes.  

  \item\label{item:th:2rd.sat.r.all:2el} For a $n$-dimensional real or complex Hilbert space $V$ the Grassmannian $\bG(r,V)$ contains no 2-element $\tau_d$-saturated subsets. 
  \end{enumerate}  
\end{theorem}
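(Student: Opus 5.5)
The plan is to prove the cycle \Cref{item:th:2rd.sat.r.all:2rd} $\Rightarrow$ \Cref{item:th:2rd.sat.r.all:spines} $\Rightarrow$ \Cref{item:th:2rd.sat.r.all:2el} $\Rightarrow$ \Cref{item:th:2rd.sat.r.all:2rd}.

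\textbf{\Cref{item:th:2rd.sat.r.all:spines} $\Rightarrow$ \Cref{item:th:2rd.sat.r.all:2el}.} This step is immediate. By \Cref{item:th:2rd.sat.r.all:spines} every saturated set is a spine $\pi^{\uparrow}$ with $\dim\pi=r'\le r$. If $r'=r$ the spine is a singleton. If $r'<r$ it is a copy of $\bG(r-r',\pi^{\perp})$, and since $\dim\pi^{\perp}>r-r'$ this is infinite. In neither case does it have exactly two elements.

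\textbf{\Cref{item:th:2rd.sat.r.all:2el} $\Rightarrow$ \Cref{item:th:2rd.sat.r.all:2rd}.} I would argue the contrapositive: assume $2r>d$ and exhibit a $2$-element saturated set. Put $k:=\max(0,2r-n)$; then $k<2r-d$ because $n>d$. Choose
\begin{equation*}
  \eta=\kappa\oplus\alpha,\qquad \eta'=\kappa\oplus\beta,
\end{equation*}
with $\dim\kappa=k$, $\dim\alpha=\dim\beta=r-k$, and $\kappa,\alpha,\beta$ mutually orthogonal. This fits because $2r-k\le n$.

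Now let $\pi\ge\eta$ be a $d$-plane. Since $\beta\perp\eta\le\pi$, we have $P_\pi\beta\le\pi\ominus\eta$, which has dimension $d-r<r-k$. Hence
\begin{equation*}
  \dim P_\pi\eta'\le k+d-r<r,
\end{equation*}
so $P_\pi\eta'$ is never an $r$-plane. The same holds with $\eta,\eta'$ swapped. The only surviving instances of $\tau_d$ are therefore $\tau_d(\zeta,\zeta\mid\zeta)$, and $\{\eta,\eta'\}$ is saturated.

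\textbf{\Cref{item:th:2rd.sat.r.all:2rd} $\Rightarrow$ \Cref{item:th:2rd.sat.r.all:spines}: reduction to pairs.} This is the main step. By \Cref{ex:spine}, spines are saturated and $\pi^{\uparrow}$ determines $\pi$ as $\bigcap\pi^{\uparrow}$, so only the forward inclusion needs proof. The core claim I would establish is:
\begin{equation*}
  \eta\ne\eta'\in S\ \Longrightarrow\ (\eta\cap\eta')^{\uparrow}\subseteq S. \tag{$*$}
\end{equation*}

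Granting $(*)$, the theorem follows. Pick $\eta_1,\eta_2\in S$ with $\kappa:=\eta_1\cap\eta_2$ of minimal dimension; then $\kappa^{\uparrow}\subseteq S$. For any $\eta_3\in S$, choose $\zeta\in\kappa^{\uparrow}$ generic, so that $\zeta\cap\eta_3=\kappa\cap\eta_3$ and $\zeta\ne\eta_3$. By $(*)$ we get $(\kappa\cap\eta_3)^{\uparrow}\subseteq S$. Any two distinct members of this spine intersect in exactly $\kappa\cap\eta_3$ for a suitable choice, so minimality forces $\kappa\le\eta_3$. Therefore $\kappa=\bigcap S$ and $S=\kappa^{\uparrow}$.

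\textbf{Proof of $(*)$ in the adjacent case.} First treat $\dim(\eta\cap\eta')=r-1$. Write $\kappa=\eta\cap\eta'$ and consider only $d$-planes $\pi\ge\eta$. Projection then fixes $\kappa$ and acts on the remaining lines inside $\kappa^{\perp}$, through $d$-planes of the form $\kappa\oplus\pi_0$ with $\pi_0\le\kappa^{\perp}$ of dimension $d-r+1$.

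So $S$ induces a $\tau_{d-r+1}$-saturated subset of $\bP(\kappa^{\perp})$, where $\dim\kappa^{\perp}=n-r+1$. The inequalities $1<d-r+1<n-r+1$ hold because $d\ge 2r\ge r+1$. That subset contains two distinct lines, namely $\eta\ominus\kappa$ and $\eta'\ominus\kappa$. \Cref{th:sat.1.all} then says it is all of $\bP(\kappa^{\perp})$, which is exactly $\kappa^{\uparrow}\subseteq S$.

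\textbf{Proof of $(*)$ in general: the expected obstacle.} The hard part is passing from general pairs to adjacent ones, i.e.\ producing from $\eta,\eta'\in S$ an element $\zeta\in S$ with $\dim(\zeta\cap\eta)=r-1$ and $\eta\cap\eta'\le\zeta$. Iterating this move, together with the adjacent case, should sweep out $(\eta\cap\eta')^{\uparrow}$ by descending induction on $\dim(\eta\cap\eta')$.

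To build $\zeta$, write $\eta'$ via principal angles relative to $\eta$. Take $\pi=\eta\oplus\sigma$, where $\sigma$ contains only one direction $w$ of $P_{\eta^{\perp}}\eta'$, and fill the remaining $d-r-1\ge r-1$ dimensions with vectors orthogonal to $\eta+\eta'$. This is where $2r\le d$ and $n>d$ are needed.

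Then $P_\pi\eta'$ should be spanned by $\eta\cap\eta'$, by the projection onto $\eta$ of the vectors not involving $w$, and by one vector with a $w$-component. I expect this to give an $r$-plane meeting $\eta$ in dimension $r-1$, provided no principal angle equals $\pi/2$. In the degenerate orthogonal directions I would first apply a preliminary generic projection to tilt them off $\pi/2$. Verifying these dimension counts, including the degenerate principal angles, is the step I expect to require the most care.
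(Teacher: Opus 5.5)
Your arguments for (b)$\Rightarrow$(c), for (c)$\Rightarrow$(a), for the reduction of (b) to $(*)$, and for the adjacent case of $(*)$ are all correct. The (c)$\Rightarrow$(a) step is a variant of \Cref{le:need.2rd}: in both, the part of $\eta'$ orthogonal to $\eta$ is too large to inject into $\pi\ominus\eta$.

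The gap is $(*)$ in general, and above all the case $\eta\cap\eta'=\{0\}$. There $(*)$ says exactly $S=\bG(r,V)$, so it is the heart of the theorem. The one-step move you propose does not exist in general. For $\pi\ge\eta$ and $v\in\eta'$, we have $P_\pi v\in\eta$ iff $P_{\eta^\perp}v\in\pi^\perp$. Hence, whenever $P_\pi$ injects on $\eta'$,
\[
\dim\bigl((P_\pi\eta')\cap\eta\bigr)\le\dim(\eta\cap\eta')+\dim\pi^\perp=\dim(\eta\cap\eta')+n-d .
\]
For $(r,d,n)=(3,6,7)$ and $\eta\cap\eta'=\{0\}$ this bound is $1<r-1$, so no single projection yields an adjacent $\zeta$. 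Your recipe needs $d-r-1$ directions orthogonal to $\eta+\eta'$, i.e.\ $n\ge d+r-1-\dim(\eta\cap\eta')$, and this does not follow from $2r\le d<n$. Even if you reach $(r-1)$-dimensional intersections by iterating, you never explain how the spines $(\zeta\cap\eta)^\uparrow\subseteq S$ descend to $(\eta\cap\eta')^\uparrow\subseteq S$. That would need control over \emph{which} intersections occur, plus a connectivity argument, and neither is supplied.

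The missing idea is that your adjacent-case argument works verbatim for any nonzero $\kappa=\eta\cap\eta'$ once you induct on $r$. Put $m=\dim\kappa$. Passing to $\kappa^\perp$, $S$ induces a $\tau_{d-m}$-saturated subset of $\bG(r-m,\kappa^\perp)$ with two distinct members, and $2(r-m)\le d-m$ still holds. The inductive hypothesis, with base case \Cref{th:sat.1.all}, then gives $\kappa^\uparrow\subseteq S$ at once. This is the paper's route, and it leaves only trivially intersecting pairs.

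For those pairs, aim for the \emph{smallest} nonzero intersection rather than the largest. Fix a line $\ell\le\eta$ and take $v\in\eta'$ with $P_\eta v$ spanning $\ell$ (after your tilting step if $\eta'\cap\eta^\perp\ne\{0\}$). Choose $\pi^\perp\le\eta^\perp$ containing $P_{\eta^\perp}v$ and otherwise generic; the dimension count uses $2r\le d$. This only consumes one dimension of $\pi^\perp$, so $n-d\ge1$ suffices, and it gives $(P_\pi\eta')\cap\eta=\ell$. Hence $\ell^\uparrow\subseteq S$ for every line $\ell\le\eta$. Next show that every member of $S$ has a trivially intersecting partner in $S$. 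Then $S$ is closed under adjacency in the connected Grassmann graph, so $S=\bG(r,V)$.
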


It will be convenient to modularize the proof, recording the formally non-obvious implications separately. We first dispose of what will later be incorporated as the \Cref{item:th:2rd.sat.r.all:2el} $\Rightarrow$ \Cref{item:th:2rd.sat.r.all:2rd} implication of \Cref{th:2rd.sat.r.all}.

\begin{lemma}\label{le:need.2rd}
  Let $1\le r< d<n\in \bZ_{\ge 3}$ and $V$ be a real or complex $n$-dimensional Hilbert space. If $2r>d$ the Grassmannian $\bG(r,V)$ contains 2-element $\tau_d$-saturated subsets. 
\end{lemma}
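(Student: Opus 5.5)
The plan is to exhibit the 2-element saturated set explicitly, as a pair of $r$-planes that overlap as little as the ambient dimension allows and whose non-shared parts are mutually orthogonal. The key observation is that projecting an $r$-plane \emph{orthogonal} to $\eta$ onto a $d$-plane containing $\eta$ only sees the $(d-r)$-dimensional complement of $\eta$ in that $d$-plane. When $2r>d$ this complement is too small to carry an $r$-dimensional image, so the projection can never be an $r$-plane and $\tau_d$ produces nothing new.

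\textbf{The construction.} Set $k:=\max(0,2r-n)$. Choose mutually orthogonal subspaces $\rho,\alpha,\alpha'\le V$ with $\dim\rho=k$ and $\dim\alpha=\dim\alpha'=r-k$, and put
\begin{equation*}
  \eta:=\rho\oplus\alpha,
  \quad
  \eta':=\rho\oplus\alpha',
  \quad
  S:=\{\eta,\eta'\}.
\end{equation*}
This needs three checks.
\begin{itemize}[wide]
\item The subspaces fit in $V$: we need $k+2(r-k)=2r-k\le n$, which holds by the choice of $k$.
\item We have $\eta\ne\eta'$: this follows from $r-k\ge 1$. If $k=0$ this is just $r\ge 1$; if $k=2r-n$ it reads $n-r\ge 1$, i.e. $r<n$.
\item The key inequality is $k<2r-d$. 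If $k=0$ this is exactly the hypothesis $2r>d$. If $k=2r-n$ it is $d<n$.
\end{itemize}

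\textbf{Checking saturation.} I would verify $\tau_d(\zeta,\zeta'\mid\zeta'')\Rightarrow\zeta''\in S$ for all $\zeta,\zeta'\in S$, in three cases.
\begin{itemize}[wide]
\item If $\zeta=\zeta'$, then $P_\pi\zeta=\zeta$ for every $\pi\ge\zeta$, so $\zeta''=\zeta\in S$.
\item Suppose $\zeta=\eta$ and $\zeta'=\eta'$. Any $d$-plane $\pi\ge\eta$ decomposes as $\pi=\eta\oplus\sigma$ with $\sigma\le\eta^\perp$ and $\dim\sigma=d-r$. Since $\rho\le\eta\le\pi$ we get $P_\pi\rho=\rho$. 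Since $\alpha'\perp\eta$, we get $P_\pi\alpha'=P_\sigma\alpha'$. Hence
\begin{equation*}
  P_\pi\eta'=\rho+P_\sigma\alpha',
  \quad
  \dim P_\pi\eta'\le k+(d-r)<r.
\end{equation*}
So $P_\pi\eta'$ is never an $r$-plane, and $\tau_d(\eta,\eta'\mid\zeta'')$ holds for no $\zeta''\in\bG(r,V)$.
\item The case $\zeta=\eta'$, $\zeta'=\eta$ is the same argument with the roles of $\alpha$ and $\alpha'$ swapped.
\end{itemize}
The relation $\tau_d$ is therefore vacuous on the mixed pairs, and $S$ is $\tau_d$-saturated.

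\textbf{Main obstacle.} The only delicate point is choosing the overlap $\rho$ correctly. When $2r\le n$ one can take $\eta\perp\eta'$ outright (so $k=0$). When $2r>n$ the two planes are forced to intersect, and the intersection must be small enough that $k+(d-r)<r$ still holds. This is exactly where the hypothesis $d<n$ enters, through the choice $k=2r-n$. Everything else reduces to the elementary projection identity $P_{\eta\oplus\sigma}|_{\eta^\perp}=P_\sigma|_{\eta^\perp}$.
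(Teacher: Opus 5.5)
Your proof is correct and is essentially the paper's argument. The paper likewise takes $\eta=\zeta\oplus\beta$ and $\eta'=\zeta'\oplus\beta$ with $\zeta,\zeta',\beta$ mutually orthogonal and $\dim\zeta=\dim\zeta'>d-r$, and observes that $\zeta'\le\eta^{\perp}$ must meet every $(n-d)$-dimensional $\pi^{\perp}\le\eta^{\perp}$, so $P_{\pi}$ cannot inject on $\eta'$ (and symmetrically). The differences are cosmetic: you minimize the overlap via $k=\max(0,2r-n)$ where the paper in effect uses non-shared parts of dimension $d-r+1$, and you bound $\dim P_{\pi}\eta'$ directly instead of exhibiting a kernel vector.
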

\begin{proof}
  Observe first that as soon as $2r>d$ (and hence $r>d-r$) we have
  \begin{equation*}
    n\ge 1+d = 1+(d-r)+r \ge 2+2(d-r),
  \end{equation*}
  so there are $\zeta,\zeta'\in \bG(r,V)$ with  
  \begin{equation*}
    \eta=\zeta\oplus \beta
    ,\quad
    \eta'=\zeta'\oplus \beta
    ,\quad
    \dim \zeta
    =
    \dim \zeta'>d-r
  \end{equation*}
  for mutually-orthogonal $\zeta$, $\zeta'$ and $\beta$. This means that $\zeta'$ (and hence $\eta'$) will intersect every $(n-d)$-subspace $\pi^{\perp}\le \eta^{\perp}$ nontrivially, so $P_{\pi}$ cannot inject on $\eta'$ . This also being valid with the roles of $\eta$ and $\eta'$ interchanged, the 2-element set $\{\eta,\eta'\}$ is $\tau_d$-saturated.
\end{proof}

The brunt of the work in proving \Cref{th:2rd.sat.r.all} consists in addressing the \Cref{item:th:2rd.sat.r.all:2rd} $\Rightarrow$ \Cref{item:th:2rd.sat.r.all:spines} implication, isolated as the following result. 

\begin{proposition}\label{pr:sat.r.all}
  Let $1\le r<2r\le d<n\in \bZ_{\ge 3}$ and $V$ an $n$-dimensional real or complex Hilbert space.

  A $\tau_d$-saturated subset $S\subseteq \bG(r,V)$ is of the form
  \begin{equation*}
    S=\left(\bigcap S\right)^{\uparrow}
    \quad
    \left(\text{in the notation of \Cref{eq:uparr}}\right),
  \end{equation*}
  and hence
  \begin{equation*}
    \bG(\le r,V)
    \ni
    \pi
    \xmapsto{\quad}
    \pi^{\uparrow}
    \in
    2^{\bG(r,V)}
  \end{equation*}
  is a bijection between $\bG(\le r,V)$ and the collection of $\tau_d$-saturated sets of $r$-planes. 
\end{proposition}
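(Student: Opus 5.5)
The plan is to show that a saturated $S$ (assumed non-empty) contains every $r$-plane containing $\kappa:=\bigcap S$. The reverse inclusion $S\subseteq\kappa^{\uparrow}$ is tautological. Once this is done, the bijection statement follows: \Cref{ex:spine} shows that each $\pi^{\uparrow}$ is saturated, and $\bigcap \pi^{\uparrow}=\pi$ for $\dim\pi\le r<n$, so $\pi\mapsto\pi^{\uparrow}$ is injective.

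\textbf{Step 1: reduction to pairs.} It suffices to prove that the saturation of any two-element set $\{\eta,\eta'\}$ contains $(\eta\cap\eta')^{\uparrow}$. Granting this, take $\eta\in S$ and choose $\eta'\in S$ minimizing $\dim(\eta\cap\eta')$. Then $(\eta\cap\eta')^{\uparrow}\subseteq S$. For any other $\eta''\in S$, consider $\eta\cap\eta'\cap\eta''$ and pick an $r$-plane $\xi\ge\eta\cap\eta'$ with $\xi\cap\eta''=\eta\cap\eta'\cap\eta''$. Such a $\xi$ exists by genericity, using $2r\le d<n$. Applying the pair claim to $\{\xi,\eta''\}$ and iterating over the finitely many possible dimension drops, we arrive at $\kappa^{\uparrow}\subseteq S$.

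\textbf{Step 2: quotient out the common part.} Write $\beta:=\eta\cap\eta'$ and $\eta=\beta\oplus\zeta$, $\eta'=\beta\oplus\zeta'$ with $\zeta,\zeta'\perp\beta$. We may first replace $\eta'$ by a projection so that this orthogonality is arranged. Restricting to $d$-planes $\pi\ge\eta$, which automatically contain $\beta$, the problem reduces to $\beta^{\perp}$. There it becomes the same problem for $(r-s)$-planes, $d-s$, $n-s$, $s=\dim\beta$, and $2(r-s)\le d-s$ still holds. This uses the same mechanism as \Cref{item:th:sat.all:pf.dnd1n1} in the proof of \Cref{th:sat.1.all}. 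We may therefore assume $\eta\cap\eta'=0$, and the goal becomes: the saturation of $\{\eta,\eta'\}$ is all of $\bG(r,V)$.

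\textbf{Step 3: trivially intersecting pairs.} Since $2r\le d$, the space $\eta+\eta'$ fits inside some $d$-plane, so $\tau_d(\eta,\eta'\mid\eta')$ and also $\tau_d(\eta',\eta\mid\eta)$ hold. Use principal angles to choose orthonormal bases $e_i\in\eta$, $f_i\in\eta'$ with $\langle e_i,f_j\rangle=\delta_{ij}\cos\theta_i$. Projecting $\eta'$ onto $d$-planes $\pi\ge\eta$ acts line-by-line on the two-dimensional blocks $\spn\{e_i,f_i\}$ together with extra orthogonal directions, which exist because $d\ge 2r$ and $n>d$. In each block we are in the situation of \Cref{pr:sat.1.23}: $e_i$ plays the role of $\ell$, and the projection of $f_i$ onto planes containing $e_i$ sweeps out circles. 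Iterating via \Cref{pr:diam.cl} as in that proof, the saturation contains an open neighbourhood, in $\bG(r,V)$, of the planes obtained by independently rotating each $f_i$ inside a cone around $e_i$. The key point is that these blockwise moves are realised by a single $d$-plane $\pi=\eta\oplus\spn\{g_1,\dots,g_r\}\oplus\cdots$, so the constructions can be carried out simultaneously.

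\textbf{Step 4: clopen argument.} Next show that the saturation $T$ of $\{\eta,\eta'\}$ is open and closed. Openness should come from Step 3: every $\xi\in T$ that is not the base plane $\eta$ lies in a neighbourhood contained in $T$. For $\xi=\eta$ itself, the neighbourhood comes from cones around $\eta$ generated by nearby members of $T$. Closedness follows as in \Cref{pr:sat.1.23}: a limit of members of $T$ is swallowed by a cone based at a nearby member. Since $\bG(r,V)$ is connected, $T$ is the whole Grassmannian.

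The main obstacle is Step 3. The blocks are not independent when $\pi$ ranges over $d$-planes containing $\eta$: the orthogonal complement $\pi\ominus\eta$ has only $d-r\ge r$ dimensions, which must be shared among the $r$ blocks. The hypothesis $2r\le d$ is exactly what gives one free direction per block, with \Cref{le:need.2rd} showing it cannot be dropped. I would first try to make the blockwise reduction rigorous, using this counting to realise independent per-block projections. Failing that, I would fall back on an induction on $r$: saturate within $\beta'^{\uparrow}$ for a line $\beta'\le\eta\cap\xi$, for suitably chosen intermediate planes $\xi$.
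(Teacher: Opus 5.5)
\textbf{There is a genuine gap: the trivially-intersecting case, which is the heart of the proposition, is not proved.} Your Steps 1 and 2 and the bijection bookkeeping are sound. They closely parallel the paper's own reductions: the minimal-intersection and genericity argument is its step (IV), and quotienting by $\beta=\eta\cap\eta'$ is its step (I). The failure is in Steps 3--4.

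\textbf{The blocks cannot be tilted independently.} Write $g_i$ for the unit vector along $f_i-P_\eta f_i$. Tilting the $i$-th block along a circle as in \Cref{pr:sat.1.23} needs its own direction $h_i\perp\eta+\eta'$, with the $h_i$ mutually orthogonal. The remaining $d-2r$ dimensions of $\pi\ominus\eta$ must also avoid all the $g_j$ and $h_j$. Together this forces $n\ge d+r$; the count $d-r\ge r$ you invoke is not the relevant constraint. For example, take $(r,d,n)=(2,4,5)$. Then $(\eta+\eta')^{\perp}$ is one-dimensional, spanned by some $h$, and $\pi^{\perp}$ is spanned by a unit vector $u=a_1g_1+a_2g_2+bh$. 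Since $P_\pi f_i=f_i-\langle f_i,u\rangle u$, as soon as both $f_1$ and $f_2$ move ($a_1a_2\ne 0$), each $P_\pi f_i$ picks up a component along the other block's $g_j$.

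\textbf{Independence would still not give openness.} Letting each $f_i$ range over a cone in its own $3$-dimensional block produces, over $\bR$, a family of dimension $2r$. But $\dim\bG(r,V)=r(n-r)>2r$ once $r\ge 2$. So nothing open in $\bG(r,V)$ has been produced, and the clopen argument of Step 4 has no input. Openness at an arbitrary $\xi\in T$ would also need a suitably placed partner of $\xi$ inside $T$, and closedness would need neighbourhoods of uniform size; neither is addressed. Even for $r=1$ the paper never proves openness in $\bP V$ beyond $n=3$; it reduces to \Cref{pr:sat.1.23} by restricting to subspaces.

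\textbf{The missing idea: one projection creates a common line, then induct on $r$.} The paper avoids openness entirely for $r\ge 2$ and inducts on $r$, with base case \Cref{th:sat.1.all}.
\begin{enumerate}
\item Suppose $\eta\cap\eta'=\{0\}$. Choose $f_0\in\eta'$ with $P_\eta f_0\ne 0$; if $\eta'\perp\eta$, a preliminary move is needed first.
\item Choose a $d$-plane $\pi\ge\eta$ whose complement $\pi^{\perp}\le\eta^{\perp}$ meets the $r$-dimensional space $P_{\eta^{\perp}}\eta'$ exactly in the line through $P_{\eta^{\perp}}f_0$. This is possible because $n-d-1\le n-2r$.
\item For $\pi\ge\eta$ one has $P_\pi v\in\eta$ if and only if $P_{\eta^{\perp}}v\in\pi^{\perp}$. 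Hence $P_\pi$ is injective on $\eta'$, and $P_\pi\eta'\in S$ meets $\eta$ exactly in the line $\ell$ spanned by $P_\eta f_0$. This is the paper's step (II).
\item Your Step 2 plus the induction hypothesis for $(r-1,d-1,n-1)$ then gives $\ell^{\uparrow}\subseteq S$.
\item Every member of $S$ has a trivially intersecting partner in $S$, by an argument like your Step 1. So $\ell^{\uparrow}\subseteq S$ for every line $\ell$ of every member of $S$.
\item Two $r$-planes adjacent in the Grassmann graph share an $(r-1)$-dimensional subspace, which contains a line because $r\ge 2$. So $S$ is closed under adjacency, and connectivity of the Grassmann graph gives $S=\bG(r,V)$.
\end{enumerate}
Your fallback of an induction on $r$ points in this direction. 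However, it supplies neither the construction of the intermediate plane $P_\pi\eta'$ nor the propagation step, and these are where the actual proof lies.
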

\begin{proof}
  That \emph{every} subset $S\subseteq \bG$ is contained in its attached $\left(\bigcap S\right)^{\uparrow}$ is tautological, so the crux here is the opposite inclusion $S\supseteq \left(\bigcap S\right)^{\uparrow}$ (assuming saturation). 
  
  Much of this will again be induction on size, so as in the proof of \Cref{th:sat.1.all}, it is convenient to have the shorthand $\cP_{r,d,n}$ for the desired conclusion. That proof's $\cP_{d,n}$ are now $\cP_{1,d,n}$, all available as inductive base cases, justifying the assumption that $r>1$ and all $\cP_{r',d',n'}$ are valid for $r'<r$.

  \begin{enumerate}[(I),wide]
  \item\label{item:pr:sat.r.all:pf.s.core}\textbf{We have
      \begin{equation*}
        \forall\left(S'\subseteq S\right)
        \left(
          \bigcap S'\ne \{0\}
          \xRightarrow{\quad}
          S\supseteq \left(\bigcap S'\right)^{\uparrow}
        \right).
      \end{equation*}
    } Indeed: denoting orthogonal complements by `$\ominus$' one may simply work only with spaces containing $\pi:=\bigcap S'$ or, equivalently, substitute $\bullet\ominus \pi$ for all spaces $\bullet$ involved. This reduces the problem to
    \begin{equation*}
      \cP_{r',d',n'}
      ,\quad
      n-n'=d-d'=r-r'=\dim \bigcap S'\ne 0
      \quad
      \left(\text{note: }2r'\le d'\right),
    \end{equation*}
    whence induction takes over. This already settles matters if $\bigcap S\ne \{0\}$, so we may henceforth assume $\bigcap S$ trivial and seek to prove that $S$ must be all of $\bG(r,V)$.

  \item\label{item:pr:sat.r.all:pf.prscr.int}\textbf{For $\eta\ne \eta'\in S$ the intersection $\left(P_{\pi}\eta'\right)\cap \eta$ can be any subspace of $\eta$ subject only to the obvious necessary constraint that it contain $\eta\cap \eta'$.} In other words, there is maximal freedom in prescribing the intersection $\left(P_{\pi}\eta'\right)\cap \eta$.

    Specifying a $d$-plane $\pi\ge \eta$ amounts to specifying
    \begin{equation*}
      \pi^{\perp}\in \bG(n-d,\eta^{\perp}). 
    \end{equation*}
    The latter space having codimension $d-r\ge r$ in the $(n-r)$-dimensional $\eta^{\perp}$, dominating the dimension of $\eta'$, we have $\eta'\cap \pi^{\perp}=\{0\}$ generically (over a dense \emph{Zariski}-open \cite[\S 2.1]{ms_nonl} set). By the same token, for $0\le r'\le r$ and a fixed $(r-r')$-dimensional $\zeta'\le \eta^{\perp}\ominus P_{\eta^{\perp}}\eta'$ we have 
    \begin{equation*}
      \bG\left(r',P_{\eta^{\perp}}\eta'\right)
      \ni
      \zeta
      \xmapsto{\quad}
      \left(P_{\left(\zeta\oplus \zeta'\right)^{\perp}}\eta'\right)\cap \eta
      \in
      \left(\eta\cap \eta'\right)_{\bG\left(r'+\dim \eta\cap\eta',\ \eta\right)}^{\uparrow}
    \end{equation*}
    identifies the collection of $r'$-spaces in $P_{\eta^{\perp}}\eta'$ with that of $(r'+\dim\eta\cap \eta')$-subspaces of $\eta$ containing the intersection $\eta\cap \eta'$ for $0\le r'\le r$. This is the stated target of \Cref{item:pr:sat.r.all:pf.prscr.int}, repackaged.

  \item\label{item:pr:sat.r.all:pf.is.triv.int}\textbf{If $\eta,\eta'\in S$ intersect trivially then for all lines $\ell\le \zeta$ we have $\ell^{\uparrow}_{\bG(r,V)}\subseteq S$.} Or: all $r$-planes containing $\ell$ are $S$-members. For arbitrary lines $\ell\le \eta$ we can find $\pi$ with $\left(P_{\pi}\eta'\right)\cap \eta=\ell$ by \Cref{item:pr:sat.r.all:pf.prscr.int}, so that indeed
    \begin{equation*}
      \forall\left(\ell\in \bP \eta\right)
      \forall\left(\eta''\in \ell^{\uparrow}_{\bG(r,V)}\right)
      \left(\eta''\in S\right)
    \end{equation*}
    by \Cref{item:pr:sat.r.all:pf.s.core} above.
    
  \item\label{item:pr:sat.r.all:pf.all.int.triv}\textbf{Every $\eta\in S$ intersects some $\eta'\in S$ trivially.} Suppose the smallest dimension of an intersection $\eta'\cap \eta$, $\eta'\in S$ is $0< r'< r$. We are assuming $\bigcap S\ne \{0\}$, so some $\eta''\in S$ will intersect $\eta$ along a proper non-zero subspace not containing $\eta\cap \eta'$. Now,
    \begin{equation*}
      \left(\eta\cap \eta'\right)_{\bG(r,V)}^{\uparrow}
      ,\quad
      \left(\eta\cap \eta''\right)_{\bG(r,V)}^{\uparrow}
      \quad
      \overset{\text{\Cref{item:pr:sat.r.all:pf.s.core}}}
      {\subseteq}
      \quad
      S,
    \end{equation*}
    and some two $r$-spaces $\zeta'$ and $\zeta''$ respectively in the two $(\bullet)^{\uparrow}$ sets will intersect along a subspace of dimension $<r'$. Further, some element of either $\left(\zeta'\cap \zeta''\right)_{\bG(r,V)}^{\uparrow}\subseteq S$ (if $\zeta'\cap \zeta'\ne \{0\}$, via \Cref{item:pr:sat.r.all:pf.s.core}) or $\ell^{\uparrow}_{\bG(r,V)}\subseteq S$ (if $\zeta'\cap \zeta''=\emptyset$, via \Cref{item:pr:sat.r.all:pf.is.triv.int} instead) will cut $\eta$ along a $(<r')$-dimensional space, providing a contradiction. 
    
  \item\textbf{Conclusion.} Per \Cref{item:pr:sat.r.all:pf.is.triv.int,item:pr:sat.r.all:pf.all.int.triv},
    \begin{equation*}
      \forall\left(\eta\in S\right)
      \forall\left(\ell\in \bP \eta\right)
      \left(\ell^{\uparrow}_{\bG(r,V)}\subseteq S\right).
    \end{equation*}
    That this entails $S=\bG(r,V)$ follows from the connectedness of the \emph{Grassmann graph} $\bG(r,V)$ \cite[Proposition 2.11]{pank_wign}, with $r$-planes as vertices, two joined by an edge precisely when their intersection is a hyperplane in both.
  \end{enumerate}
\end{proof}

The individual components are now ripe for aggregation.

\pf{th:2rd.sat.r.all}
\begin{th:2rd.sat.r.all}
  Simply observe that
  \begin{equation*}
    \text{\Cref{item:th:2rd.sat.r.all:2rd}}
    \xRightarrow{\quad\text{\Cref{pr:sat.r.all}}\quad}
    \text{\Cref{item:th:2rd.sat.r.all:spines}}
    \xRightarrow{\quad\text{formal}\quad}
    \text{\Cref{item:th:2rd.sat.r.all:2el}}
    \xRightarrow{\quad\text{\Cref{le:need.2rd}}\quad}
    \text{\Cref{item:th:2rd.sat.r.all:2rd}}
  \end{equation*}
  finishing the proof. 
\end{th:2rd.sat.r.all}

%%%%%%%%%%%%%%%%%%%%%%%%%%%%%%%% 
%%%%%%%%%%%%%%%%%%%%%%%%%%%%%%%% 

%\newpage

\addcontentsline{toc}{section}{References}
%\bibliography{bib}{}
%\bibliographystyle{plain}

% BEGIN INSERTED BBL (ternary-rel-grsmn-xv1.bbl)
\def\polhk#1{\setbox0=\hbox{#1}{\ooalign{\hidewidth
  \lower1.5ex\hbox{`}\hidewidth\crcr\unhbox0}}}

% END INSERTED BBL

\Addresses

\end{document}